\newcommand{\R}{\mathbb R}
\DeclarePairedDelimiter\floor{\lfloor}{\rfloor}
\DeclarePairedDelimiter\ceil{\lceil}{\rceil}
\DeclarePairedDelimiter\card{\lvert}{\rvert}
\DeclareMathOperator{\aff}{aff}
\DeclareMathOperator{\conv}{conv}
\DeclareMathOperator{\OT}{OT}
\DeclareMathOperator{\twr}{twr}
\newtheorem{theorem}{Theorem}[section]
\newtheorem{lemma}[theorem]{Lemma}
\newtheorem{prop}[theorem]{Proposition}
\newtheorem{problem}[theorem]{Problem}
\theoremstyle{definition}
\theoremstyle{remark}
\newtheorem{observation}[theorem]{Observation}
\numberwithin{equation}{section}
\title{A note on the tolerated Tverberg theorem}
\author{Natalia García-Colín}
\address[N. García-Colín]{CONACYT, INFOTEC Centro de Investigación e Innovación en Tecnologías de la Información y Comunicación.}
\email{ngarciaco@conacyt.mx}
\author{Miguel Raggi}
\address[M. Raggi]{Escuela Nacional de Estudios Superiores, Unidad Morelia, UNAM}
\email{mraggi@gmail.com}
\author{Edgardo Roldán-Pensado}
\address[E. Roldán-Pensado]{Instituto de Matemáticas, Unidad Juriquilla, UNAM}
\email{e.roldan@im.unam.mx}
\begin{document}

\begin{abstract}
In this paper we give an asymptotically tight bound for the tolerated Tverberg Theorem when the dimension and the size of the partition are fixed. To achieve this, we study certain partitions of order-type homogeneous sets and use a generalization of the Erdős-Szekeres theorem.
\end{abstract}

\maketitle

\section{Introduction}\label{sec:intro}

Tverberg's theorem \cite{Tve1966} states that any set with at least $(d+1)(r-1)+1$ points in $\R^d$ can be partitioned into $r$ disjoint sets $A_1, \ldots, A_r$ such that $\bigcap_{i=1}^r \conv(A_i) \neq \emptyset$. Furthermore, this bound is tight.

The tolerated Tverberg theorem generalizes Tverberg's theorem by introducing a new parameter $t$ called \emph{tolerance}. It states that there is a minimal number $N=N(d,t,r)$ so that any set $X$ of at least $N$ points in $\R^d$ can be partitioned into $r$ disjoint sets $A_1, \ldots, A_r$ such that $\bigcap_{i=1}^r \conv(A_i\setminus Y) \neq \emptyset$ for any $Y\subset X$ with at most $t$ points.

In contrast with the classical Tverberg theorem, the best known bounds for $N(d,t,r)$ are not tight. In \cite{Lar1972}, Larman proved that $N(d,1,2)\leq 2d+3,$ García-Colín showed that $N(d,t,2)\le(t+1)(d+1)+1$ in her PhD thesis \cite{GCol2007}, later published in \cite{GL2015}. This was later generalized by Strausz and Soberón who gave the general bound $N(d,t,r)\le (r-1)(t+1)(d+1)+1$ \cite{SS2012}. Later, Mulzer and Stein gave the bound $N(d,t,r)\le 2^{d-1}(r(t+2)-1)$ which improves the previous bound for $d\le 2$ and is tight for $d=1$ \cite{MS2014}.

As for lower bounds, Ramírez-Alfonsín \cite{RAlf2001} and García-Colín \cite{GL2015}, using oriented matroids, proved that $\ceil{\frac{5d}{3}} +3 \leq N{(d, 1, 2)}$ and $2d + t +1 \leq N{(d, t, 2)},$ respectively. Furthermore, Larman's upper bound is known to be sharp for $d=1, 2, 3$ and $4$ \cite{Lar1972,Forge2001}. Lastly, Soberón gave the bound $r(\floor{\frac d2}+t+1)\le N(d,t,r)$ \cite{Sob2015}.

In this paper we show that for fixed $d$ and $r$, the correct value for $N(d,t,r)$ is asymptotically equal to $rt$. To be precise, we prove the following theorem.
\begin{theorem}\label{thm:main}
For fixed $r$ and $d$ we have that
$$N(d,t,r) = rt + o(t).$$
\end{theorem}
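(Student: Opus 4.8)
Since Soberón's lower bound $N(d,t,r) \ge r(\lfloor d/2\rfloor + t + 1) \ge rt$ already settles the lower direction, the whole content of the theorem is the matching upper bound $N(d,t,r) \le rt + o(t)$: one must show that every $X \subseteq \R^d$ with $\card{X} \ge rt + o(t)$ admits a partition $A_1,\dots,A_r$ that remains Tverberg after deleting any $t$ points. The plan is to first understand this statement for highly structured point sets and then transfer it to arbitrary $X$ by a Ramsey-type reduction.

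The guiding heuristic is the one-dimensional case, where Mulzer and Stein's value $r(t+2)-1 = rt + O(1)$ is realised by sorting $X$ along the line and distributing the points among $A_1,\dots,A_r$ in round-robin order of their rank. First I would verify that this interleaving is $t$-tolerant directly: for a fixed deletion set $Y$ with $\card{Y}\le t$ one produces a common point $z_Y \in \bigcap_i \conv(A_i \setminus Y)$ by a threshold argument, and the round-robin spreading guarantees that no adversarial $Y$ can push two parts to opposite sides of every threshold. It is essential here that $z_Y$ is allowed to depend on $Y$; this is exactly what yields the leading coefficient $r$ rather than the $(d+1)r$ coming from forcing a single deep common point via the centerpoint theorem.

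To run the same interleaving argument in $\R^d$ I would pass to an order-type homogeneous configuration, in which the orientation of every relevant $(d+1)$-tuple is constant; on such a set the nonemptiness of $\bigcap_i \conv(A_i\setminus Y)$ is governed by the linear order alone (as for points on the moment curve, via Gale-type evenness conditions), so the one-dimensional interleaving and its $Y$-dependent common point carry over, with the dimension contributing only an additive $O(d) = o(t)$ term. The structure itself is supplied by the generalization of the Erdős–Szekeres theorem stated earlier, which extracts from any sufficiently large $X$ an order-type homogeneous subset; I would apply it so that the homogeneous part already carries $rt + o(t)$ points and the discarded points are absorbed into the tolerance budget.

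The main obstacle is reconciling the two size regimes. The Erdős–Szekeres-type extraction is expensive — the ambient set must be tower-sized in the homogeneous subset — so a literal application would inflate the overhead far beyond $o(t)$. The crux is therefore to arrange the homogenization so that it costs only a sublinear number of points: one must apply the Ramsey step at a scale (a slowly growing homogeneity parameter, or a block decomposition that avoids discarding a constant fraction) for which the lost points number $o(t)$ while the orientations needed to certify tolerance of the interleaved partition are still controlled. Verifying that this trade-off closes — that the combinatorial interleaving remains valid under the weaker homogeneity one can afford, uniformly over all deletions $Y$ — is where the real work lies, and the lower-order behaviour of $N(d,t,r)$ is then precisely the cost of this reduction.
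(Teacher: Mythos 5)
Your proposal assembles the right ingredients (alternating partitions of order-type homogeneous sets with a deletion-dependent witness point, plus the Erd\H{o}s--Szekeres generalization), and you correctly identify the tower-type cost of the Ramsey step as the central difficulty --- but you then leave exactly that difficulty unresolved, gesturing at ``a slowly growing homogeneity parameter, or a block decomposition'' without saying why either would close the gap. This is a genuine gap, because your first-choice plan (extract a single homogeneous subset carrying $rt+o(t)$ points) is quantitatively hopeless: it needs $\twr_d(\Theta(rt))$ ambient points. The idea that actually closes it in the paper is the \emph{superadditivity of tolerance over disjoint subsets}: if $X_1,X_2$ are disjoint then $t(X_1\cup X_2,r)\ge t(X_1,r)+t(X_2,r)$, since one can take the union of tolerant partitions and any deletion set splits between the pieces. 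Granting this, one never needs a large homogeneous set. Fix a \emph{constant} $k=k(\varepsilon,d,r)$, write $n=\OT_d(k)+mk$, and greedily extract $m$ pairwise disjoint homogeneous subsets of size $k$ (possible as long as at least $\OT_d(k)$ points remain). Each piece has tolerance at least $\frac kr-c(d,r)$ under its alternating partition, these tolerances add to $m\bigl(\frac kr -c\bigr)$, and choosing $m=\OT_d(k)$ with $k$ large enough that $\frac{1+c r}{1+k}<\varepsilon$ makes both the $\OT_d(k)$ leftover points and the $mc$ loss a vanishing fraction of $n$. Without the superadditivity observation, your ``block decomposition'' has no mechanism for combining the per-block guarantees into a single tolerant partition of all of $X$, and that combination is precisely the missing step.

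Two smaller points. First, your dismissal of the centerpoint theorem (as forcing a leading coefficient $(d+1)r$) is slightly off: the paper does use a centerpoint, but only on the constant-size homogeneous subset $X'\subset X\setminus Y$ that survives the deletion, so the leading coefficient stays $r$; the deletion-dependence of the witness enters through the pigeonhole choice of $X'$, not through avoiding centerpoints. Second, your appeal to Soberón's bound for the lower direction is fine and is essentially equivalent to the paper's Proposition on $T(n,d,r)\le\floor{\frac nr}-\floor{\frac d2}$.
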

This improves all previously known upper bounds whenever $t$ is large compared to $r$ and $d$, and comes with a matching lower bound.

The proof follows from studying the behavior of $t$ with respect to $N$ and using the Erdős-Szekeres theorem for cyclic polytopes in $\R^d$. We include a short review of cyclic polytopes and the Erdős-Szekeres theorem in Section \ref{sec:prelim}. In \ref{sec:alternating} we prove a useful Lemma about alternating partitions of a cyclic polytope which leads to an interesting open problem. The proof of Theorem \ref{thm:main} is detailed in Section \ref{sec:remarks}.

\section{Preliminaries}\label{sec:prelim}

In this section we introduce some definitions and recall some well known concepts which we later use in the proofs of this paper.

\subsection{Order-type homogeneous sets}

Any ordered set $X \subset \R^d$ with the property that the orientation of any ordered subset of $X$ with $(d+1)$ elements is always the same is called an \emph{order-type homogeneous set}. 
 
A classic example of such a set is the set of vertices of a cyclic polytope, $X$, which is constructed as follows: consider the moment curve $\gamma(\alpha)=(\alpha, \alpha^2, \ldots, \alpha^d)$, given real numbers $\alpha_1<\alpha_2<\dots<\alpha_n$ define $X=\{\gamma(\alpha_1),\gamma(\alpha_2),\dots,\gamma(\alpha_n)\}.$ The set $\conv(X)$ is the $d$-dimensional \emph{cyclic polytope} on $n$ points and any other polytope combinatorially equivalent to the cyclic polytope is also sometimes referred to as a cyclic polytope or, more generally, as an order-type homogeneous set.

Order-type homogeneous sets have been studied extensively \cite{Bis2006,Gal1963,Gr2003,Mat2002,Zie1995} and have proven to be very useful as examples with extremal properties in various combinatorial problems. In our case they will prove useful in finding better bounds for the tolerated Tverberg number $N(d,t,r)$. 

The following lemma, due to Gale \cite{Gal1963} is one of the most useful tools for studying the properties of order-type homogeneous sets.

\begin{lemma}[Gale's evenness criterion]\label{lem:Gale}
Let $X=\{x_1,x_2,\ldots,x_n\}$ be an order-type homogeneous set. A subset $F \subset X$ such that $\card F=d$ determines a facet of $conv(X)$ if and only if, any two vertices in $ X \setminus F$ have an even number of vertices of $F$ between them in the order.
\end{lemma}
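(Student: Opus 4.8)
The plan is to prove both directions of the equivalence by relating facets of the cyclic polytope to the sign pattern of a single linear functional evaluated along the moment curve. Since the statement concerns any order-type homogeneous set, and all such sets are combinatorially equivalent to the cyclic polytope, I would reduce to the concrete case $x_i = \gamma(\alpha_i)$ with $\alpha_1 < \cdots < \alpha_n$ and $\gamma(\alpha) = (\alpha, \alpha^2, \ldots, \alpha^d)$; the facet structure is a combinatorial invariant, so it suffices to verify the criterion there.

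The key computation is as follows. A set $F = \{\gamma(\alpha_{i_1}), \ldots, \gamma(\alpha_{i_d})\}$ of $d$ points spans a hyperplane $H$; I would write $H$ as the zero set of an affine functional $\varphi(x) = \langle c, x\rangle + c_0$. Restricting to the moment curve, $\varphi(\gamma(\alpha))$ becomes a real polynomial $p(\alpha) = c_d \alpha^d + \cdots + c_1 \alpha + c_0$ of degree at most $d$ whose roots are exactly $\alpha_{i_1}, \ldots, \alpha_{i_d}$. Thus
\[
    p(\alpha) = c_d \prod_{k=1}^{d}(\alpha - \alpha_{i_k}),
\]
and $F$ determines a facet precisely when all the remaining points lie weakly on one side of $H$, i.e.\ when $p(\alpha_j)$ has the same sign for every $j$ with $\gamma(\alpha_j) \in X \setminus F$. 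Because $p$ is a product of linear factors with distinct real roots, its sign at a point $\alpha_j$ depends only on the parity of the number of roots $\alpha_{i_k}$ exceeding $\alpha_j$ — equivalently, $p$ changes sign exactly as $\alpha$ crosses each $\alpha_{i_k}$.

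From here the two directions follow by tracking sign changes. For the forward direction, if $F$ is a facet then all $p(\alpha_j)$ agree in sign, so between any two points of $X \setminus F$ the polynomial $p$ must undergo an even number of sign changes; since each sign change corresponds to crossing one root $\alpha_{i_k} \in F$, there is an even number of elements of $F$ strictly between them in the order. For the converse, if every pair of vertices in $X \setminus F$ encloses an even number of elements of $F$, then $p$ retains a constant sign across the whole family $\{\alpha_j\}$, so all of $X \setminus F$ lies on one side of $H$ and $F$ is a facet. I would also dispatch the degenerate subtlety that $H$ must not contain a further point of $X$ (so that $F$ is exactly the facet): since the $\alpha_i$ are distinct and $\deg p \le d$, $p$ has at most $d$ roots, hence no $\alpha_j$ outside $F$ can satisfy $p(\alpha_j) = 0$.

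The main obstacle is purely bookkeeping: making the correspondence between "sign changes of $p$ as $\alpha$ increases" and "parity of the count of intervening indices" fully rigorous, including the boundary behavior at the two extreme points of the order and the case where $F$ contains the smallest or largest $\alpha_i$. Once that parity-of-sign-changes dictionary is set up carefully, both implications are immediate consequences of the intermediate value theorem applied to the factored polynomial $p$.
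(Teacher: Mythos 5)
The paper does not prove this lemma at all: it is quoted as a classical result and attributed to Gale \cite{Gal1963}, so there is no in-paper argument to compare against. Your proposal is the standard proof (essentially the one in Gale's paper and in Ziegler's \emph{Lectures on Polytopes}) and it is correct: the affine functional vanishing on $\aff(F)$ restricts on the moment curve to a nonzero polynomial $p$ of degree at most $d$ with the $d$ prescribed distinct roots, hence $p(\alpha)=c_d\prod_k(\alpha-\alpha_{i_k})$, the sign of $p(\alpha_j)$ is governed by the parity of the number of roots exceeding $\alpha_j$, and the facet condition (all remaining points strictly on one side, with no extra point on the hyperplane because $p$ has no further roots) translates exactly into Gale's evenness condition. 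The only step I would ask you to make explicit is the opening reduction: you should justify why it suffices to check the criterion on the moment curve. The point is that whether a $d$-subset $F$ is a facet is a predicate on the order type alone --- $F$ spans a hyperplane iff the relevant $d\times d$ minors are nonzero, and all points of $X\setminus F$ lie on one side iff the orientations of the $(d+1)$-tuples $(F,y)$ agree for all $y\in X\setminus F$ --- and the order type of an order-type homogeneous set of $n$ points is determined up to a global sign change (which does not affect facets) by $n$, since every ordered $(d+1)$-tuple is a permutation of an increasing one. With that sentence added, the argument is complete.
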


As a consequence of Lemma \ref{lem:Gale}, the polytopes that arise as the convex hulls of order-type homogeneous sets are known to be $\floor{\frac{d}{2}}$-neighborly. That is, the convex hull of every $\floor{\frac{d}{2}}$ points in $X$ is contained in a facet of $C$ and, since $C$ is simplicial, the convex hull of such vertices is a $\floor{\frac{d}{2}}-1$ face of $C$.

Another useful fact when working with order-type homogeneous sets is Lemma 2.1 from \cite{BMP2014}:

\begin{lemma}\label{lem:ot}
An ordered set $X=\{x_1,x_2,\dots x_n\}$ in general position in $\R^d$ is order-type homogeneous if and only if the polygonal path $\pi=x_1x_2\dots x_n$ intersects every hyperplane in at most $d$ points, with the exception of the hyperplanes that contain an edge of $\pi$.
\end{lemma}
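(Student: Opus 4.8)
The plan is to pass to the linear-algebraic description of the configuration. Write the points as the columns of the $(d+1)\times n$ matrix $M$ whose $i$-th column is $(1,x_i)^{T}$. Then the orientation of an ordered tuple $x_{i_0},\dots,x_{i_d}$ with $i_0<\dots<i_d$ is exactly the sign of the corresponding maximal minor of $M$, so being order-type homogeneous translates into ``all maximal minors of $M$ have the same sign,'' while general position becomes ``all maximal minors are nonzero.'' A hyperplane $H=\{x:\langle a,x\rangle=b\}$ corresponds to the vector $v=(-b,a)$, and the row vector $w=v^{T}M=(f(x_1),\dots,f(x_n))$, with $f(x)=\langle a,x\rangle-b$, records on which side of $H$ each vertex lies. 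The whole statement will be read off from the sign changes of such $w$.

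First I would record the bookkeeping relating intersections to signs. If $H$ contains no edge of $\pi$ --- equivalently, $w$ has no two consecutive zero entries --- then each edge meets $H$ in at most one point and
$$\card{\pi\cap H}=z+c,\qquad z=\card{\{i:w_i=0\}},\quad c=\card{\{i:w_iw_{i+1}<0\}},$$
so $z$ counts vertices on $H$ and $c$ counts transversal edge-crossings. A short run-by-run analysis of the sign pattern of $w$ then yields the sandwich $S^-(w)\le z+c\le S^+(w)$, where $S^-$ and $S^+$ denote the minimum, respectively maximum, number of sign changes obtainable by deleting, respectively arbitrarily signing, the zero entries of $w$. This also explains why the clause excluding hyperplanes through an edge is unavoidable: otherwise $\pi\cap H$ is infinite.

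For the forward implication I would prove the variation-diminishing bound: if all maximal minors of $M$ share one sign, then $S^+(w)\le d$ for every $w$ in the row space of $M$. Suppose not; then there are $d+2$ columns $j_0<\dots<j_{d+1}$ on which some signing of the zeros of $w$ strictly alternates. The kernel of the $(d+1)\times(d+2)$ submatrix $M'$ on these columns is spanned by the cofactor vector $c_\ell=(-1)^{\ell}\det(M'_{\hat\ell})$, whose entries strictly alternate because all these minors share one nonzero sign. Pairing $w$ against it gives $\sum_\ell w_{j_\ell}c_\ell=0$; but each nonzero term then has the same sign, and by general position at most $d$ of the $w_{j_\ell}$ vanish, so at least two terms survive and the sum cannot be zero, a contradiction. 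With the sandwich this gives $\card{\pi\cap H}=z+c\le S^+(w)\le d$, the ``only if'' direction.

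The harder direction is the converse, which I would argue contrapositively: if $X$ is not order-type homogeneous I must exhibit a hyperplane containing no edge that meets $\pi$ in at least $d+1$ points. Non-homogeneity means two maximal minors have opposite signs; since the graph on $\binom{[n]}{d+1}$ whose edges are single-element swaps is connected, I may choose $(d+1)$-sets $I=P\cup\{p\}$ and $I'=P\cup\{q\}$ with $\card P=d$ whose minors have opposite signs. On the $d+2$ columns indexed by $P\cup\{p,q\}$ the cofactor/kernel vector now fails to alternate, and this failure is precisely what lets me choose positive weights realizing a fully alternating sign pattern orthogonal to the kernel --- that is, a functional $w=v^{T}M$ whose restriction to these $d+2$ columns alternates and hence has $d+1$ sign changes. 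A small perturbation of $v$ clears the remaining zeros of $w$ without destroying these sign changes, producing a generic hyperplane $H'$ (through no vertex, a fortiori no edge) with $\card{\pi\cap H'}=c=S^-(w)\ge d+1$, contradicting the hypothesis. I expect the main obstacle to be exactly this last construction --- converting an algebraic sign discrepancy among minors into an honest hyperplane with many transversal crossings --- together with the care needed so that vertices lying on $H$ (tangencies) neither inflate nor deflate the count beyond what the bound $S^-(w)\le\card{\pi\cap H}\le S^+(w)$ controls.
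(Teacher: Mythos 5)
Your argument is essentially correct, but note that the paper does not prove this lemma at all: it is quoted verbatim as Lemma~2.1 of B\'ar\'any--Matou\v{s}ek--P\'or \cite{BMP2014}, so there is no internal proof to compare against. What you give is a self-contained proof by the classical variation-diminishing argument for matrices with same-sign maximal minors, and all the key steps are sound: the identification $\card{\pi\cap H}=z+c$ together with the sandwich $S^-(w)\le z+c\le S^+(w)$ (valid exactly when $w$ has no two consecutive zeros, i.e.\ $H$ contains no edge); the kernel/cofactor pairing showing $S^+(w)\le d$ when all minors share a sign, with general position supplying the two surviving nonzero terms; and, for the converse, the Johnson-graph connectivity argument producing $d+2$ columns whose cofactor vector has all entries nonzero but fails to alternate, hence whose orthogonal complement (which is the row space of the $(d+1)\times(d+2)$ submatrix) contains a strictly alternating vector. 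Two small points deserve to be written out rather than asserted: first, the one-line reason that a nowhere-zero, non-alternating $u\in\R^{d+2}$ admits a strictly alternating vector in $u^{\perp}$ (the numbers $(-1)^{\ell}u_{\ell}$ take both signs, so some positive combination of them vanishes); second, the run-by-run verification of the sandwich inequality, since that is where vertices lying on $H$ are accounted for. With those details filled in, your proof is complete and arguably more explicit than the source the paper relies on.
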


\subsection{The Erdős-Szekeres theorem}
In 1935 Erdős and Szekeres proved two important theorems in combinatorial geometry \cite{ES1935}.
The first Erdős-Szekeres theorem implies that any sequence of numbers with length $(n-1)^2+1$ always contains a monotonous (either increasing or decreasing) subsequence. The second Erdős-Szekeres theorem states that among any $2^{\Theta(n)}$ points in the plane there are $n$ of them in convex position.

These two theorems can be thought as results on order-type homogeneous sets in dimensions $1$ and $2$.
The following theorem, proved in \cite{Suk2014,BMP2014}, generalizes both results to order-type homogeneous sets in any $d$-dimensional space.

\begin{theorem}\label{thm:Suk}
Let $\OT_d(n)$ be the smallest integer such that any set of $\OT_d(n)$ points in general position in $\R^d$ contains an order-type homogeneous subset of size $n$. Then $\OT_d(n)=\twr_d(\Theta(n))$, where the tower function $\twr_d$ is defined by $\twr_1(\alpha)=\alpha$ and $\twr_{i+1}(\alpha)=2^{\twr_i(\alpha)}$.
\end{theorem}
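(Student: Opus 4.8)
The plan is to prove the two inequalities hidden in the $\Theta$ separately: an upper bound $\OT_d(n)\le\twr_d(Cn)$ and a matching lower bound $\OT_d(n)\ge\twr_d(cn)$, each by induction on the dimension $d$, with the planar (second) Erdős–Szekeres theorem serving as the base case $d=2$ and the one-dimensional monotone-subsequence theorem feeding the inductive gluing. For the upper bound I would first recast order-type homogeneity as a coloring problem: after fixing the given order on a set of $N$ points in general position, assign to each ordered $(d+1)$-tuple the sign of its orientation. This is a $2$-coloring of the $(d+1)$-subsets of the index set, and an order-type homogeneous subset is exactly a set that is monochromatic for this coloring. A direct appeal to the $(d+1)$-uniform hypergraph Ramsey theorem already produces a homogeneous $n$-subset once $N$ is a tower of height $d$, but only with a \emph{polynomial} in $n$ in the top argument, which is too weak for the sharp $\twr_d(\Theta(n))$. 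The whole point is therefore to exploit that the orientation coloring is not arbitrary: it is the chirotope of a point configuration, so the signs on tuples sharing $d$ points vary in a tightly controlled way. I would package this consistency through Lemma \ref{lem:ot}, reading it as saying that a subset fails to be homogeneous precisely when its polygonal path oscillates across some hyperplane more than $d$ times.

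The inductive step would proceed by projection. Fixing the first point $p$ and projecting the remaining $N-1$ points centrally from $p$ onto a hyperplane yields a configuration in $\R^{d-1}$ whose order type records exactly the orientations of all $(d+1)$-tuples through $p$; by the inductive hypothesis this projected configuration contains an order-type homogeneous subset of size roughly the logarithm of its own threshold, so one tower level is stripped off. Peeling points one at a time and using the chirotope consistency to glue the per-point homogeneous pieces into a single globally homogeneous subset should then turn $N=\twr_d(Cn)$ points into a homogeneous set of size $n$. The delicate accounting is to guarantee that removing one dimension costs exactly one exponential together with only a \emph{linear} loss in $n$, rather than the polynomial loss of the naive Ramsey bound; this is the step I expect to be the main obstacle, and it is precisely where the geometric structure, as opposed to generic hypergraph Ramsey, must enter, via a Moshkovitz–Shapira-style monotone-path or alternating-sequence analysis that bounds how often the orientation can flip along the path.

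For the matching lower bound I would give an explicit stepping-up construction. Starting from a set in $\R^{d-1}$ of size $\twr_{d-1}(cn)$ in general position with no order-type homogeneous subset of size $n$, I would lift it to $\R^d$ by taking many translated and rescaled copies arranged along a rapidly growing (\emph{stretched}) new coordinate, as in a stretched grid or a moment-curve lift, chosen so that any order-type homogeneous subset of the lift either collapses into a single copy or induces a monotone pattern across copies. The first possibility is bounded by the inductive hypothesis and the second by the one-dimensional Erdős–Szekeres theorem, so both are short. Tuning the stretching factors to grow tower-exponentially multiplies the number of points by an exponential while keeping the largest homogeneous subset linear in $n$, producing $\twr_d(cn)$ points with no homogeneous $n$-subset and closing the gap. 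Verifying that the lift remains in general position and, more importantly, that no homogeneous subset can \emph{cheat} by spreading thinly across many copies is the technical heart of this direction.
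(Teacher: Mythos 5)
First, a point of comparison: the paper does not prove Theorem \ref{thm:Suk} at all. It is imported verbatim from \cite{Suk2014,BMP2014} (upper bound essentially due to Suk, lower bound to B\'ar\'any--Matou\v{s}ek--P\'or), so there is no in-paper proof to measure your attempt against; what you have written has to stand on its own as a reconstruction of those external arguments. As such it is a plausible roadmap --- induction on dimension, one tower level per dimension, projection from a point for the upper bound, stepping-up for the lower bound --- but it is not a proof: both of the steps you yourself flag as ``the main obstacle'' and ``the technical heart'' are exactly where the entire content of \cite{Suk2014} and \cite{BMP2014} lives, and you defer rather than supply them. Concretely, for the upper bound: projecting from a single point $p$ only controls the orientations of the $(d+1)$-tuples \emph{containing} $p$; if you then ``peel points one at a time and glue,'' you are running the Erd\H{o}s--Rado scheme, and that scheme is precisely what produces the extra exponential, i.e.\ $\twr_{d+1}(O(n))$ rather than $\twr_d(O(n))$. (A side correction: the naive $(d+1)$-uniform Ramsey bound is off by a full tower level, not merely by ``a polynomial in $n$ in the top argument''; even the polynomial-top bound $\twr_d(n^{O(1)})$ already requires the semi-algebraic Ramsey theorem of Conlon--Fox--Pach--Sudakov--Suk.) Suk's actual route is to use one application of the $(d-1)$-dimensional statement to pass to a subset on which the orientation coloring becomes \emph{transitive}, and then to invoke the Ramsey bound for transitive colorings/monotone paths (Fox--Pach--Sudakov--Suk, Moshkovitz--Shapira), which is one tower level cheaper than general Ramsey. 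You gesture at this with the phrase ``Moshkovitz--Shapira-style,'' but the reduction to transitivity and the application of that bound are the proof, and they are absent.

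The lower bound half has the same character. The claim that any order-type homogeneous subset of the lifted configuration ``either collapses into a single copy or induces a monotone pattern across copies'' is exactly the dichotomy one must \emph{prove}, and it is false as stated without further hypotheses: a homogeneous subset can take a bounded number of points from each of many copies, and ruling out such ``thinly spread'' subsets requires the careful choice of stretching factors and the hyperplane-intersection analysis (via Lemma \ref{lem:ot}) that occupies most of \cite{BMP2014}. You acknowledge this but do not carry it out, and you also do not verify that the recursion multiplies the point count by an exponential \emph{at the correct tower level} while keeping the forbidden homogeneous size linear in $n$ --- the bookkeeping that makes the bound come out to $\twr_d(\Omega(n))$ rather than something weaker. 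In short: the skeleton is consistent with the known proofs, but every load-bearing bone is missing.
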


\section{Tolerance of partitions of sets}\label{sec:proof}

Let $X=\{x_1,x_2,\dots,x_n\}$ be a set of points in $\R^d$. We define the \emph{tolerance} $t(X,r)$ of $X$ as the maximum number of points such that there is a partition $A_1, \ldots, A_r$ of $X$ with the property that $\bigcap_{i=1}^r \conv(A_i\setminus Y) \neq \emptyset$ for any $Y\subset X$ with at most $t(X,r)$ points.

\begin{observation}\label{dumbpigeonholelemma}
	Let $X_1$, $X_2$ be disjoint sets of points of $\R^d$. Then $t(X_1\cup X_2,r) \geq t(X_1,r)+t(X_2,r)$.
\end{observation}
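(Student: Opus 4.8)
The plan is a direct pigeonhole argument, which is exactly what the name of the observation suggests. Write $t_1 = t(X_1,r)$ and $t_2 = t(X_2,r)$. By the definition of tolerance I would first fix a partition $B_1,\ldots,B_r$ of $X_1$ witnessing $t_1$ (so that $\bigcap_{i=1}^r \conv(B_i\setminus Y_1)\neq\emptyset$ whenever $Y_1\subseteq X_1$ has $\card{Y_1}\le t_1$) and a partition $C_1,\ldots,C_r$ of $X_2$ witnessing $t_2$. Since $X_1$ and $X_2$ are disjoint, setting $A_i = B_i\cup C_i$ yields a genuine partition of $X_1\cup X_2$ into $r$ parts, and this is the candidate partition I would use to certify $t(X_1\cup X_2,r)\ge t_1+t_2$.

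Next I would take an arbitrary $Y\subseteq X_1\cup X_2$ with $\card Y\le t_1+t_2$ and split it as $Y_1 = Y\cap X_1$ and $Y_2 = Y\cap X_2$, so that $\card{Y_1}+\card{Y_2}=\card Y\le t_1+t_2$. The one observation that makes everything work is the pigeonhole fact that $\card{Y_1}\le t_1$ or $\card{Y_2}\le t_2$; otherwise $\card{Y_1}+\card{Y_2}\ge (t_1+1)+(t_2+1)>t_1+t_2$. Assume without loss of generality that $\card{Y_1}\le t_1$. Then the defining property of the partition of $X_1$ furnishes a point $p\in\bigcap_{i=1}^r \conv(B_i\setminus Y_1)$.

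Finally I would invoke monotonicity of the convex hull. Because $A_i\setminus Y\supseteq B_i\setminus Y_1$, we have $\conv(B_i\setminus Y_1)\subseteq\conv(A_i\setminus Y)$ for every $i$, so the point $p$ above lies in $\bigcap_{i=1}^r \conv(A_i\setminus Y)$, which is therefore nonempty. Since $Y$ was an arbitrary set of size at most $t_1+t_2$, the partition $\{A_i\}$ tolerates $t_1+t_2$ deleted points, giving $t(X_1\cup X_2,r)\ge t_1+t_2$.

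There is no serious obstacle here, but the tempting wrong move is to split the problem symmetrically and demand both $\card{Y_1}\le t_1$ and $\card{Y_2}\le t_2$ at once, applying the two partitions in parallel. This fails because $Y$ may concentrate entirely in one of the sets (for instance $Y\subseteq X_1$ with $\card Y = t_1+t_2>t_1$), so one cannot control both parts simultaneously. The resolution is to notice that only \emph{one} part needs to succeed: the untouched points of the other set only enlarge $\conv(A_i\setminus Y)$, so a single witness $p$ already lies in every $\conv(A_i\setminus Y)$.
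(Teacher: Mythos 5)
Your proof is correct, and it is clearly the intended argument: the paper states this as an observation without proof, and its internal label (``dumbpigeonholelemma'') confirms that the authors had exactly this combine-the-witnessing-partitions-plus-pigeonhole reasoning in mind. The key step you isolate --- that only one of the two sub-partitions needs to survive the deletion, since the points of the other set only enlarge each $\conv(A_i\setminus Y)$ --- is precisely what makes the union partition work.
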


We also define the following two numbers;
$$t(n, d, r)=\min_{\substack{ X \subset \R^d \\ |X|=n}}\{t(X,r)\} \quad \text{ and } \quad T(n, d, r)=\max_{\substack{X \subset \R^d \\ |X|=n}}\{t(X,r)\}.$$
For fixed $n,$ $t(n, d, r)$ indicates that \emph{there exists a set} $X$ such that for all partitions $A_1, \ldots, A_r$ of $X,$ such that $\bigcap_{i=1}^r \conv(A_i\setminus Y) \neq \emptyset$ for any $Y\subset X$ with at most $t(X,r)=t(n, d, r)$ points, while $T(n, d, r)$ indicates that \emph{for all} $X$ with size $n$ there is a partition $A_1, \ldots, A_r$ of $X,$ such that $\bigcap_{i=1}^r \conv(A_i\setminus Y) \neq \emptyset$ for any $Y\subset X$ with at most $t(X,r)=T(n, d, r)$ points.

\subsection{Tolerance of order-type homogeneous sets}\label{sec:alternating}

For proving Theorem \ref{thm:main} we need to study a specific type of partitions. Let $X=\{x_1,x_2,\dots,x_n\}$ be an ordered set of points in $\R^d$ with a the order specified by the subindices and let $r>0$ be a fixed integer. The partition of $X$ into $r$ sets $A_1, \ldots, A_r$ given by $A_i=\{x_j : j \equiv i \mod r\}$ is called an \emph{alternating partition}. Our main interest is to determine when the convex hulls of the sets $A_i$ have a common point and how \emph{tolerant} are they are.

\begin{lemma} \label{lem:alternating}
Let $X=\{x_1,x_2,\dots,x_n\}$ be an order-type homogeneous set of points in $\R^d$ with alternating partition $A_1, \ldots, A_r$. Then there is a number $c(d,r)\le (d+1)(\floor{\frac d2}+1)(r-1)+1\approx \frac{rd^2}{2}$ such that if $n\ge c(d,r)$, then $\bigcap_{i=1}^r \conv(A_i)\neq\emptyset$.
\end{lemma}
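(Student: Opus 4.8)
The plan is to produce a single point $p$ lying in $\conv(A_i)$ for every $i$, chosen so that it is ``deep'' inside $X$ in the sense of halfspace depth. The guiding observation is that $p\in\conv(A_i)$ if and only if every closed halfspace whose bounding hyperplane passes through $p$ contains at least one point of $A_i$: otherwise some hyperplane through $p$ would strictly separate $p$ from $A_i$. Hence it suffices to find one $p$ such that, in every direction, the closed halfspace bounded by the hyperplane through $p$ meets all $r$ residue classes at once. Writing $s=\floor{d/2}+1$, I would obtain this $p$ from the centerpoint theorem: when $n\ge (d+1)s(r-1)+1$ one has $\ceil{n/(d+1)}\ge s(r-1)+1$, so there is a centerpoint $p$ of $X$ whose halfspace depth is at least $s(r-1)+1$, meaning that every closed halfspace containing $p$ (in particular every one with $p$ on its boundary) contains at least $s(r-1)+1$ points of $X$. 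This is precisely the arithmetic that reproduces the stated value of $c(d,r)$.

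Next I would verify $p\in\conv(A_i)$ for each fixed $i$ by contradiction. If $p\notin\conv(A_i)$ there is a direction $u$ with $\langle u,x\rangle<\langle u,p\rangle$ for all $x\in A_i$; choosing a generic level $c$ with $\max_{a\in A_i}\langle u,a\rangle<c<\langle u,p\rangle$ (avoiding all values $\langle u,x_j\rangle$ and all hyperplanes spanned by edges of the path), the halfspace $\{\langle u,\cdot\rangle\ge c\}$ still contains the smaller halfspace bounded at $p$, hence at least $s(r-1)+1$ points of $X$, while still containing no point of $A_i$. Here Lemma \ref{lem:ot} does the essential work: the polygonal path $x_1x_2\cdots x_n$ crosses the hyperplane $\langle u,\cdot\rangle=c$ at most $d$ times, so the sequence $\bigl(\langle u,x_j\rangle-c\bigr)_{j=1}^n$ changes sign at most $d$ times and the indices $\{1,\dots,n\}$ split into at most $d+1$ maximal blocks of consecutive integers on which the sign is constant. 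Since the blocks alternate in sign, the positive blocks number at most $\ceil{(d+1)/2}=\floor{d/2}+1=s$, and these together account for all of the $\ge s(r-1)+1$ indices lying in the halfspace.

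The contradiction then comes from pigeonhole: distributing at least $s(r-1)+1$ indices among at most $s$ blocks forces one block to contain at least $r$ consecutive indices, and any $r$ consecutive integers form a complete residue system modulo $r$. In particular that block contains an index congruent to $i$, i.e. a point of $A_i$ inside the halfspace, contradicting that the halfspace misses $A_i$. Therefore $p\in\conv(A_i)$ for every $i$, so $p\in\bigcap_{i=1}^r\conv(A_i)$ and the intersection is nonempty.

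The step I expect to be the main obstacle is the precise bookkeeping that turns the $\le d$ crossing bound of Lemma \ref{lem:ot} into the count of at most $\floor{d/2}+1$ relevant blocks, together with the perturbation argument ensuring the separating hyperplane may be taken generic so that the lemma applies; this is exactly where the neighborliness-flavored factor $\floor{d/2}+1$ enters and where the matching of $\ceil{(d+1)/2}$ with $\floor{d/2}+1$ must be checked for both parities of $d$. Once that combinatorial count is nailed down, the centerpoint estimate supplies the exact threshold $c(d,r)\le(d+1)(\floor{d/2}+1)(r-1)+1$.
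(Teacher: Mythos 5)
Your proposal is correct and follows essentially the same route as the paper: a centerpoint of depth $\lceil n/(d+1)\rceil > (\lfloor d/2\rfloor+1)(r-1)$, a perturbed separating hyperplane, the crossing bound of Lemma \ref{lem:ot} to limit the halfspace to at most $\lfloor d/2\rfloor+1$ blocks of consecutive indices, and a pigeonhole on residues mod $r$ (the paper phrases this contrapositively, bounding each block by $r-1$ points rather than extracting $r$ consecutive indices, but the count is identical). The parity check $\lceil (d+1)/2\rceil=\lfloor d/2\rfloor+1$ that you flag as the main obstacle does go through for both parities of $d$, so there is no gap.
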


\begin{figure}
\includegraphics{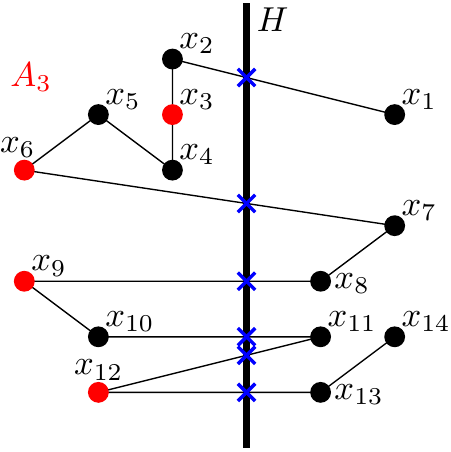}
\caption{An example for Lemma \ref{lem:alternating} with $n=14$, $d=6$ and $r=3$. The set $A_3$, in red, is to the left of $H$ and the path $\pi$ intersects $H$ at most $d$ times.}
\label{fig:lem}
\end{figure}

\begin{proof}
Let $O$ be a center point for $X$. This means that every semi-space containing $O$ also contains at least $\ceil{\frac{n}{d+1}}$ points of $X$. We will show that $O\in\conv(A_i)$ for every $i$. Suppose this is not the case. Then there is a hyperplane $H$ strictly separating $O$ from some $\conv(A_i)$. We may assume (by perturbing $H$ if necessary) that no point in $X$ is contained in $H$.

Let $H^+$ be the semi-space bounded by $H$ that contains $O$. Since $O$ is a center point then $X\cap H^+$ contains at least $\ceil{\frac{n}{d+1}}>\left(\floor{\frac d2}+1\right)(r-1)$ points.

On the other hand, by Lemma \ref{lem:ot}, the polygonal path $\pi$ generated by $X$ intersects $H$ at most $d$ times. Therefore $\pi\cap H^+$ has at most $\floor{\frac d2}+1$ connected components and, since $A_i\cap H^+=\emptyset$, each of these components is a sub-path of $\pi$ contained between two consecutive points of $A_i\subset\pi$ (see Figure \ref{fig:lem}). Thus, each component contains at most $r-1$ points from $X$, so $X\cap H^+$ has at most $(\floor{\frac d2}+1)(r-1)$ points. This contradicts our assumption that $O\not\in\conv(A_i)$.
\end{proof}

The bound for $c(d,r)$ given in the previous lemma is not tight. In fact it can be improved when $d$ is even by noticing that, if $n\equiv i$ (mod $r$), then $X\cap H^+$ can have at most $\frac d2(r-1)+i$ points. The bound obtained in this case is $c(d,r)\le \min_i \left\{\frac{d(d+1)}2(r-1)+i(d+1)+s_i\right\}$, where $s_i$ be the smallest positive integer such that $s_i\equiv \frac{d(d+1)}2-id$ (mod $r$). When $r$ is large compared to $d$ this simply equals $\frac{d(d+1)}{2}r$. However this bound is still not tight, giving rise to an interesting open question.

\begin{problem}
Determine the smallest value for $c(d,r)$ for which Lemma \ref{lem:alternating} holds.
\end{problem}

The cases $d=1$ and $d=2$ are not difficult. We have the following values: $c(1,r)=2r-1$, $c(2,1)=1$, $c(2,2)=4$ and $c(2,r)=3r$ when $r\ge 3$. Note that the bound from Lemma \ref{lem:alternating} is tight for $d=1$ and the bound described for even dimensions after the proof of the lemma is tight for $d=2$.

If $r=2$, a simple separating-hyperplane argument shows that $c(d,r)=d+2$.
In general it can also be proved that $c(d,r)\ge (d+1)r$ whenever $r>d$, but this is also not tight. The example in Figure \ref{fig:ex} shows that $c(3,4)>16$.

\begin{figure}
\includegraphics[width=0.5\textwidth]{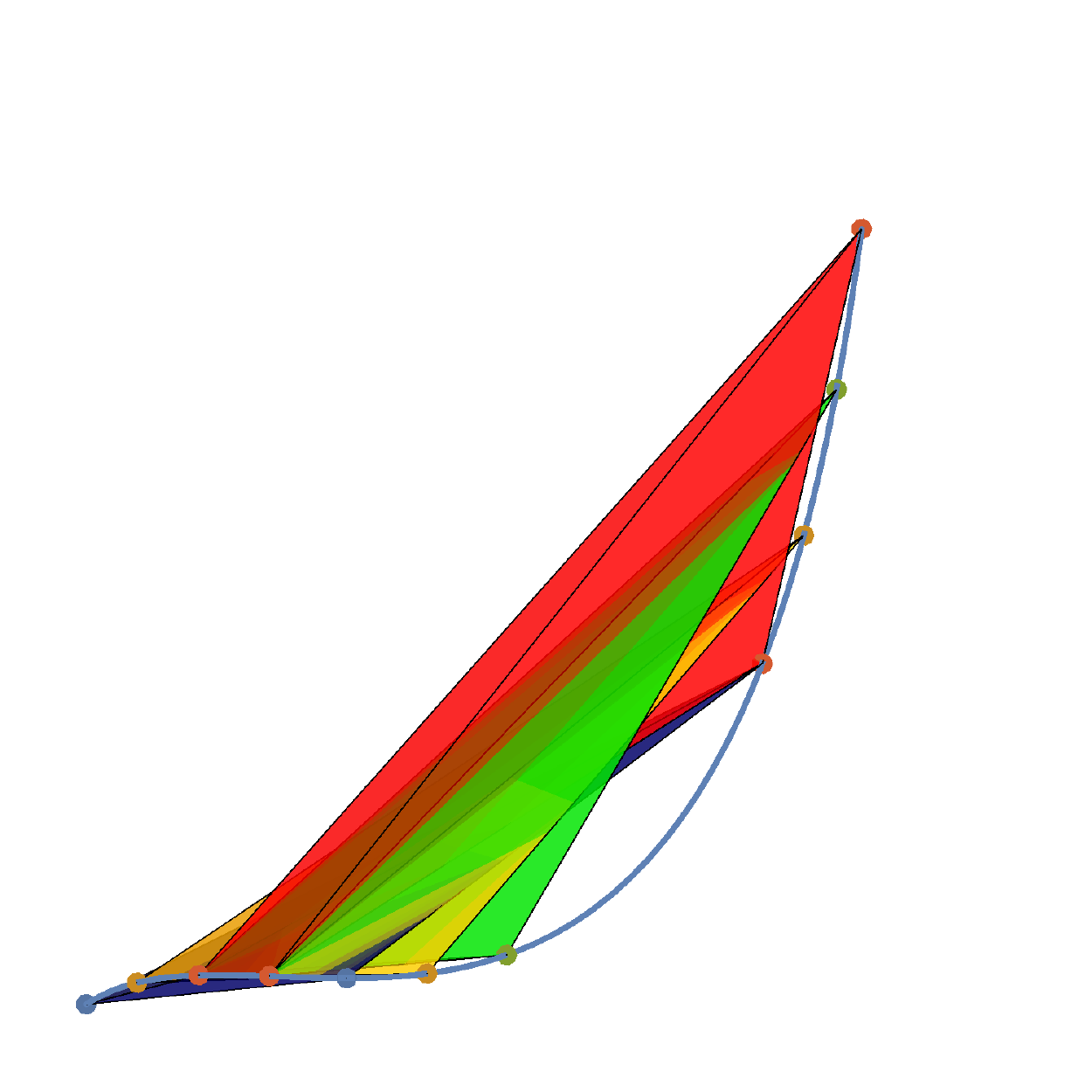}
\caption{Four alternating tetrahedra with vertices on the moment curve $(t,t^2,t^3)$ and without a common point. In this example $t$ takes the values $-4$, $-3$, $-2$, $-2$, $-2$, $-1$, $-1$, $-1$, $0$, $1$, $2$, $6$, $6$, $7$, $8$ and $9$, which may be perturbed so that all vertices are distinct.}
\label{fig:ex}
\end{figure}

Now we are ready to study the tolerance of an order-type homogeneous set. The upper bound in the following theorem was proved by Soberón in \cite{Sob2015} but we include the proof for completion.

\begin{theorem}\label{thm:cotastol}
Let $X=\{x_1,x_2,\dots,x_n\}$ be an order-type homogeneous set of points in $\R^d$. Then $\floor{\frac{n}{r}} - c(d,r) \le t(X,r)\leq \floor{\frac{n}{r}}-\floor{\frac{d}{2}}$, where $c(d,r)$ is the number from Lemma \ref{lem:alternating}.
\end{theorem}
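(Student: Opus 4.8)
The plan is to prove the two inequalities separately: the lower bound rests on the alternating partition together with Lemma \ref{lem:alternating}, and the upper bound on the $\floor{\frac d2}$-neighborliness of $\conv(X)$ (this is Soberón's argument).

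For the lower bound $\floor{\frac nr}-c(d,r)\le t(X,r)$, I would exhibit one partition that tolerates this many deletions, namely the alternating partition $A_1,\dots,A_r$. Set $m=\floor{\frac nr}$ and group the first $rm$ points of $X$ into $m$ consecutive blocks $B_1,\dots,B_m$ of $r$ points each, so that every block $B_k$ meets each class $A_i$ in exactly one point. Given any $Y\subseteq X$ with $\card{Y}\le m-c(d,r)$, each point of $Y$ lies in at most one block, so at least $m-\card{Y}\ge c(d,r)$ blocks are disjoint from $Y$; let $Z$ be their union. The key observation is that $Z$, being a subset of $X$, is order-type homogeneous, and that $\{A_i\cap Z\}_{i=1}^r$ is exactly the alternating partition of $Z$: each untouched block contributes one point to each class in the same cyclic order, so the class of the $\ell$-th point of $Z$ depends only on $\ell \bmod r$. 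Since $\card{Z}\ge r\,c(d,r)\ge c(d,r)$, Lemma \ref{lem:alternating} yields $\bigcap_{i=1}^r\conv(A_i\cap Z)\ne\emptyset$, and hence $\bigcap_{i=1}^r\conv(A_i\setminus Y)\ne\emptyset$ because $A_i\cap Z\subseteq A_i\setminus Y$. As $Y$ was an arbitrary set of at most $m-c(d,r)$ points, this gives $t(X,r)\ge\floor{\frac nr}-c(d,r)$.

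For the upper bound $t(X,r)\le\floor{\frac nr}-\floor{\frac d2}$, I would show that \emph{every} partition can be defeated by deleting few points. Given a partition $A_1,\dots,A_r$, pigeonhole forces some class, say $A_i$, to satisfy $\card{A_i}\le\floor{\frac nr}$. Delete from $A_i$ all but $\floor{\frac d2}$ of its points; this removes a set $Y$ with $\card{Y}=\card{A_i}-\floor{\frac d2}\le\floor{\frac nr}-\floor{\frac d2}$, leaving $S=A_i\setminus Y$ of exactly $\floor{\frac d2}$ points. Because $\conv(X)$ is $\floor{\frac d2}$-neighborly and simplicial, $S$ spans a proper face $F=\conv(S)$ whose only vertices among $X$ are the points of $S$. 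If some $p$ lay in $\bigcap_j\conv(A_j\setminus Y)$, then in particular $p\in\conv(S)=F$; since $F$ is a face, any representation of $p$ as a convex combination of points of $A_j$ with $j\ne i$ could only use points of $A_j$ lying in $F$, i.e. in $S$, which is impossible as $A_j\cap S=\emptyset$. Hence $\bigcap_j\conv(A_j\setminus Y)=\emptyset$, so the partition fails to tolerate $\card{Y}$ deletions; since $\card{Y}\le\floor{\frac nr}-\floor{\frac d2}$ and the partition was arbitrary, $t(X,r)\le\floor{\frac nr}-\floor{\frac d2}$.

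The routine parts are the block-counting in the lower bound and the tolerance bookkeeping in the upper bound, which I would keep to a line each. I expect the main conceptual step to be the identification, in the lower bound, that the restriction of an alternating partition to a union of full, equally spaced blocks is again an alternating partition; this is precisely what allows Lemma \ref{lem:alternating} to be applied to the surviving set $Z$. The face argument for the upper bound is comparatively direct once neighborliness is invoked.
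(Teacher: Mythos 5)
Your proof is correct and follows essentially the same route as the paper: the alternating partition combined with Lemma \ref{lem:alternating} for the lower bound, and the $\floor{\frac{d}{2}}$-neighborliness face argument for the upper bound. Your explicit decomposition into $m$ consecutive blocks of size $r$ is just a spelled-out version of the paper's one-line pigeonhole step producing a surviving sub-alternating partition, so there is no substantive difference in approach.
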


\begin{proof}
First we prove the upper bound for $t(X,r)$. For any partition of $X$ into $r$ disjoint parts $A_1, \ldots, A_r$ we will find that for some $i \in [r]=\{1, \ldots, r\}$, $\card{A_i}\leq \floor{\frac{n}{r}}$. Let $Y \subset A_i $ be any subset such that $\card{A_i\setminus Y}\leq \floor{\frac{d}{2}}$, then necessarily $\conv(A_i\setminus Y)$ is disjoint from $\conv(X\setminus A_i)$ as $X$ is the set of vertices of a $\floor{\frac{d}{2}}$-neighborly polytope and therefore $\conv(A_i\setminus Y)$ is a $\floor{\frac{d}{2}}-1$ face of the polytope $\conv(X)$. In particular, this implies that $\bigcap_{i=1}^r \conv(A_i \setminus Y) = \emptyset$, hence $t(X,r)\leq \floor{\frac{n}{r}}-\floor{\frac{d}{2}}$.

For the lower bound, consider the alternating partition $A_1, \ldots, A_r$ of $X$. Assume that $Y \subset X$ satisfies $\card Y \leq \floor{\frac{n}{r}}-c(d,r)$. By the pigeonhole principle, we can find $X' \subset X\setminus Y$ such that $\card{X'}= c(d,r)$ and the restriction of the partition of $X$ to $X'$ (i.e. $A_1\cap X', \ldots, A_r \cap X'$) is also an alternating partition. Thus, by Lemma \ref{lem:alternating} we have that $\bigcap_{i=1}^r \conv(A_i \cap X') \neq \emptyset$ and the theorem follows.
\end{proof}

\subsection{Tolerance of partitions of general sets}

The tolerated Tverberg number's bound $N(d,t,r)\leq (r-1)(t+1)(d+1)+1$ implies that for any set $X$ of $n$ points, its tolerance is bounded by $\frac{n-1}{(r-1)(d+1)}-1 \leq t(X, r).$ On the other hand, we can argue that the tolerance under any partition of a set can never be greater than the size of the smallest part in the partition, \textit{i.e.} $T(n, d, r)\leq \floor{\frac{n}{r}}.$ 

The arguments in the previous paragraphs imply that $\frac{n-1}{(r-1)(d+1)}-1 \leq t(n, d, r) \leq t(X,n) \leq T(n, d, r) \leq \floor{\frac{n}{r}}$ holds for any $X \subset \R^d$ with $\card{X}=n$.

In this section we exhibit improved bounds for the tolerance of partitions of general sets.

\begin{prop}\label{lem: bound2}
For any positive integers $n,d,r$ we have that $T(n,d,r)\leq \floor{\frac{n}{r}}-\floor{\frac{d}{2}}$.
\end{prop}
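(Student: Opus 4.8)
The plan is to reduce the statement to a purely combinatorial claim about a single partition. Since $T(n,d,r)=\max_{|X|=n}t(X,r)$, it suffices to show that for \emph{every} set $X$ with $\card X=n$ and \emph{every} partition $A_1,\ldots,A_r$ there is a set $Y$ with $\card Y\le\floor{\frac nr}-\floor{\frac d2}$ and $\bigcap_{i}\conv(A_i\setminus Y)=\emptyset$: any such $Y$ certifies that the partition cannot tolerate $\floor{\frac nr}-\floor{\frac d2}$ deletions. By pigeonhole some part $A_i$ satisfies $\card{A_i}\le\floor{\frac nr}$, and the entire difficulty is to destroy the common intersection by deleting few points of a small part.

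First I would dispose of the easy regime. If the smallest part satisfies $\card{A_i}\le\floor{\frac nr}-\floor{\frac d2}$, then taking $Y=A_i$ makes $\conv(A_i\setminus Y)=\conv(\emptyset)=\emptyset$, so the whole intersection is empty and we are done. Hence I may assume every part has more than $\floor{\frac nr}-\floor{\frac d2}$ points, so all $r$ parts have size within $\floor{\frac d2}$ of $\floor{\frac nr}$; for fixed $d,r$ and large $n$ each part is large. In this main regime the plan is to mimic the upper-bound argument of Theorem \ref{thm:cotastol}: pick a part $A_l$ with $\card{A_l}\le\floor{\frac nr}$, keep inside it a subset $P\subseteq A_l$ of $\floor{\frac d2}$ points whose convex hull is separated from the convex hull of all remaining points, and delete $Y=A_l\setminus P$, at cost $\card{A_l}-\floor{\frac d2}\le\floor{\frac nr}-\floor{\frac d2}$. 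Concretely I would fix a generic direction $u$ and let $P$ consist of $\floor{\frac d2}$ points of $A_l$ that are extreme in direction $u$; if these happen to be the $\floor{\frac d2}$ topmost points of all of $X$, then a hyperplane orthogonal to $u$ strictly separates $\conv(P)$ from $\conv(X\setminus P)\supseteq\bigcap_{j\ne l}\conv(A_j)$, forcing the intersection to be empty exactly as in the neighborly case.

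The main obstacle — and the essential difference from the order-type homogeneous setting of Theorem \ref{thm:cotastol}, where \emph{every} $\floor{\frac d2}$-subset is automatically a face — is to guarantee that some part of size at most $\floor{\frac nr}$ really does contain such a separated $\floor{\frac d2}$-cap. For general $X$ the hull need not be $\floor{\frac d2}$-neighborly, and a small interior part may contain no vertex of $\conv(X)$ at all, so one cannot merely shrink the smallest part. I would resolve this by a rotating-hyperplane and pigeonhole argument: as $u$ varies, the $\floor{\frac d2}$ topmost points of $X$ range over the $(\floor{\frac d2}-1)$-faces of $\conv(X)$, and these faces are distributed among the $r$ nearly equal parts, so I would argue that one of them lies in a single part of size $\le\floor{\frac nr}$, deleting a few extra points from other parts if needed to stay within budget. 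Making this counting precise — and handling the boundary cases where the near-equal part sizes force one to keep slightly more than $\floor{\frac d2}$ points, and hence to separate using a higher-dimensional face (as already happens for $d=2$, $r=2$ with $n$ odd, where one keeps an edge rather than a single vertex) — is where I expect the real work to lie.
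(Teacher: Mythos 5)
Your reduction to exhibiting a single bad deletion set $Y$ is sound, and the easy regime is fine, but the main regime has a genuine gap --- the one your own closing paragraph identifies and does not close. Your strategy requires some part $A_l$ with $\card{A_l}\le\floor{\frac nr}$ to contain $\floor{\frac d2}$ points $P$ with $\conv(P)$ strictly separated from $\conv(X\setminus P)$, i.e.\ $P$ must span a face (a ``cap'') of $\conv(X)$. For general $X$ this can simply fail: unlike the order-type homogeneous case, $\conv(X)$ need not be $\floor{\frac d2}$-neighborly, and its $(\floor{\frac d2}-1)$-faces need not lie inside any single small part. For instance, in $\R^4$ (so $\floor{\frac d2}=2$) take $X$ with only $5$ points on the boundary of $\conv(X)$ and all other points interior, and let $r=5$ with each part containing exactly one hull vertex; then every $2$-subset of every part contains an interior point of $X$, so no direction $u$ produces the required cap inside one part. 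The rotating-hyperplane pigeonhole cannot rescue this, because the obstruction is not about which face you choose but about no face lying in any one part; and ``deleting a few extra points from other parts'' only spends more of a budget already exhausted by $A_l\setminus P$.

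The paper avoids the face-finding problem entirely with a double-counting argument. For each pair of parts $A_i,A_j$ with $\card{A_i\cup A_j}\ge d+2$ and each $d$-subset $D\subset A_i\cup A_j$, the hyperplane $H=\aff(D)$ yields two candidate deletion sets, $(A_i\cap H^+)\cup(A_j\cap H^-)$ and $(A_i\cap H^-)\cup(A_j\cap H^+)$; removing either one leaves the survivors of $A_i$ and $A_j$ on opposite sides of $H$, so both sets must have more than $t'$ points, and adding the two inequalities gives $\card{A_i}+\card{A_j}-d>2t'$. Summing over all pairs yields $n>\frac r2(2t'+d)$ and hence the stated bound. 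If you want to salvage your approach, the fix is to let $Y$ delete points from two parts simultaneously and separate the two truncated parts from each other by a hyperplane through $d$ points of their union, rather than separating a cap of one part from all of $X$ --- at which point you have essentially rederived the paper's argument.
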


\begin{proof} Let $A_1, \ldots A_r$ be a partition of the set, and let $t'$ be maximum such that $\bigcap_{i=1}^r \conv(A_i\setminus Y) \neq \emptyset$ for any $Y\subset X$ with at most $t'$ points. Then $t'\leq T(n, d, r).$

Let $A_i, A_j$ be parts such that $i \neq j$, we may assume that $|A_i \cup A_j|\geq d+2,$ otherwise $t'=0.$ Then for any subset of $d$ points in $A_i \cup A_j$, $D$ we must have that the hyperplane $H=\aff(D)$ is such that;
$\card{ H^+ \cap A_i} + \card{H^- \cap A_j} >t'$ and $\card{H^- \cap A_i} + \card{H^+ \cap A_j} >t'$.

Hence $\card{A_i} + \card{A_j} -d > 2t'$ and adding through all the different pairs, 
$ \sum_{i<j} \card{A_i} + \card{A_j}> \binom{r}{2} (2t'+d)$. That is, 
$ (r-1)\sum_{i \in [r]} \card{A_i} > \binom{r}{2} (2t'+d)$ and thus 
$n >\frac{r}{2}(2t'+d)$. Rearranging the later equation we can obtain 
$\frac{n}{r}>t' +\frac{d}{2}$. Therefore 
$\frac{n}{r}>t' + \floor{\frac{d}{2}}$ and so
$\floor{\frac{n}{r}} \geq t' + \floor{\frac{d}{2}}$. 
\end{proof}


\begin{lemma} \label{thm:main2} Let $r,d$ be fixed natural numbers. For a large enough $n$ we have that $t(n,d,r) \geq \frac{n}{r}-o(n)$.
\end{lemma}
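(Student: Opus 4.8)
The plan is to reduce an arbitrary configuration to the order-type homogeneous case already controlled by Theorem \ref{thm:cotastol}. The mechanism is: inside any large point set one can find \emph{many disjoint} order-type homogeneous pieces by iterating the Erdős–Szekeres-type Theorem \ref{thm:Suk}, each such piece has tolerance close to $\frac mr$ by Theorem \ref{thm:cotastol}, and these contributions add up by the superadditivity recorded in Observation \ref{dumbpigeonholelemma}. Fix $r$ and $d$, and let $X\subset\R^d$ be an arbitrary set in general position with $\card X=n$ (the relevant case for $t(n,d,r)$). I introduce an auxiliary size parameter $m=m(n)$ and extract homogeneous blocks greedily: as long as the set of unused points has at least $\OT_d(m)$ elements, Theorem \ref{thm:Suk} produces an order-type homogeneous subset of size $m$; I remove it and repeat. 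This yields pairwise disjoint order-type homogeneous sets $X_1,\dots,X_k\subseteq X$ of size $m$ with
\[
 k\ \ge\ \frac{n-\OT_d(m)}{m}.
\]

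Next I add up the tolerances. By Theorem \ref{thm:cotastol}, each block satisfies $t(X_j,r)\ge\floor{m/r}-c(d,r)\ge\frac mr-1-c(d,r)$. Applying Observation \ref{dumbpigeonholelemma} repeatedly to the disjoint union $X_1\cup\cdots\cup X_k\subseteq X$ (the leftover points can only help, since tolerance is nonnegative) gives
\[
 t(X,r)\ \ge\ \sum_{j=1}^{k}t(X_j,r)\ \ge\ k\Bigl(\frac mr-1-c(d,r)\Bigr).
\]

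It remains to choose $m$. I want $m\to\infty$ so that the per-block defect $1+c(d,r)$ becomes negligible against $\frac mr$, and simultaneously $\OT_d(m)=o(n)$ so that $k=\frac nm(1-o(1))$ rather than losing a constant factor. Both hold if I take $m(n)$ to be the largest integer with $\OT_d(m)\le\sqrt n$: since $\OT_d$ is finite-valued with $\OT_d(m)=\twr_d(\Theta(m))$, this forces $m(n)\to\infty$ while $\OT_d(m)\le\sqrt n=o(n)$. Writing $C=1+c(d,r)=O(1)$ and substituting,
\[
 t(X,r)\ \ge\ \frac nm(1-o(1))\Bigl(\frac mr-C\Bigr)\ =\ \frac nr(1-o(1))\Bigl(1-\tfrac{Cr}{m}\Bigr)\ =\ \frac nr-o(n).
\]
The error term depends only on $n$ (through $m(n)$, $\OT_d$ and $c(d,r)$), not on $X$, so it is uniform; taking the minimum over all $X$ of size $n$ yields $t(n,d,r)\ge\frac nr-o(n)$.

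The main obstacle is precisely the balancing act concealed in the choice of $m$. The two requirements pull against each other: to make each homogeneous block nearly optimal (tolerance $\approx\frac mr$) I want $m$ large, but to harvest about $n/m$ disjoint blocks without sacrificing a constant factor I need the start-up cost $\OT_d(m)$ to be sublinear in $n$. Because $\OT_d$ grows like a height-$d$ tower, $m$ can be pushed only as fast as a $d$-fold iterated logarithm of $n$; the key point is that this still tends to infinity, which is all the two error terms need, and one must verify that the $o(1)$ from $k\approx n/m$ and the $O(1/m)$ from the per-block defect amalgamate into a single $o(n)$ loss.
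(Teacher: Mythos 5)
Your proposal is correct and follows essentially the same route as the paper: greedily extract disjoint order-type homogeneous blocks via Theorem \ref{thm:Suk}, lower-bound each block's tolerance by Theorem \ref{thm:cotastol}, and sum via Observation \ref{dumbpigeonholelemma}, with the block size tending to infinity slowly enough that the start-up cost $\OT_d(\cdot)$ stays sublinear. Your explicit choice of the block size as the largest $m$ with $\OT_d(m)\le\sqrt n$ is just a cleaner packaging of the paper's $\varepsilon$--$k$--$m$ bookkeeping and handles arbitrary $n$ directly.
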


%
%
%

\begin{proof} Fix small $\varepsilon > 0$. We shall construct a large number $n$ satisfying that, for any set $X$ of $n$ points in $\R^d$, we have $t(X,r) \geq \frac{n}{r}(1-\varepsilon)$. Let $c=c(d,r)$ as in Lemma \ref{lem:alternating}.

Assume that $n = \OT_d(k) + mk$ for some positive integers $m$ and $k$, where $\OT_d$ is the bound from Theorem \ref{thm:Suk}. Then, given a set $X$ of $n$ points in general position in $\R^d$, we can select $m$ pairwise-disjoint order-type homogeneous subsets $X_1, X_2,\dots, X_m$ of size $k$ from $X$.

Partition the points of each $X_i$ into $r$ parts using the alternating method proposed in Section \ref{sec:alternating}. By Theorem \ref{thm:cotastol}, we have that $t(X_i,r) \geq \frac{k}{r} - c$ and therefore, by Observation \ref{dumbpigeonholelemma}, $t(X,r) \geq t(X_1,r)+\dots+t(X_m,r) \geq m\left(\frac kr-c\right)$. We may rewrite this last value as
$$m\left(\frac kr-c\right)=\frac nr\left(\frac{mk-mcr}{n}\right)=\frac nr\left(1-\frac{\OT_d(k)+mcr}{\OT_d(k)+mk}\right).$$

By choosing a large enough $k$ so that $\frac{1+cr}{1+k} < \varepsilon$ and $m=\OT_d(k)$, we obtain $t(X,r)\ge\frac nr\left(1-\frac{1+cr}{1+k}\right)>\frac nr(1-\varepsilon)$.
\end{proof}

\section{Bounds on the Tolerated Tverberg number}\label{sec:remarks}

So far we have being concerned with studying the behavior of $t$ with respect to $n, d$ and $r$. By a simple manipulation of the results in the previous section, we may now easily prove Theorem \ref{thm:main}.

\begin{proof}[Proof of Theorem \ref{thm:main}]
Fix $r$ and $d$. By Proposition \ref{lem: bound2} we have that $t \leq \floor{\frac{n}{r}}-\floor{\frac{d}{2}}$, which implies $n\ge tr+\frac{r(d-2)}{2}$. Lemma \ref{thm:main2} can be rewritten as $n\leq tr + o(n)$.
These inequalities imply $n=\Theta(t)$, so we have that
$$rt+\frac{r(d-2)}{2}\leq n \leq rt+ o(t),$$
which yields the result.
\end{proof}

This result clarifies why the search for a definite $N(d, r, t)$ has been elusive. It seems that the relationship between $t$ and $N$ changes as $t$ increases, as opposed to being a constant multiple of $t$ (for a fixed $d$ and $r$).

From the analysis made in Lemma \ref{thm:main2} it follows that the term $o(t)$ in Theorem \ref{thm:main} decays like $\frac{t}{\log^{(d)}(t)}$, which is extremely slow.
It is our impression that $N(d,t,r)$ approaches $rt$ much faster than this.


\end{document}